\documentclass[11pt,leqno]{amsart}
\usepackage[utf8]{inputenc}
\usepackage{amsfonts,amssymb,hyperref}
\usepackage{mathtools}
\usepackage{enumerate,float}
\usepackage{cleveref}
\usepackage{mathrsfs}
\usepackage{soul}

\usepackage[normalem]{ulem}

%% page dimensions

\addtolength{\hoffset}{-.8cm}
\addtolength{\textwidth}{1.6cm}
\addtolength{\voffset}{-1.5cm}
\addtolength{\textheight}{1.6cm}
\addtolength{\footskip}{.5cm}

\pagestyle{plain}
\numberwithin{equation}{section}

%% theorems etc

\theoremstyle{plain}
\newtheorem{lemma}{Lemma}[section]
\newtheorem{proposition}[lemma]{Proposition}
\newtheorem{theorem}[lemma]{Theorem}
\newtheorem{corollary}[lemma]{Corollary}

\theoremstyle{remark}
\newtheorem{question}[lemma]{Question}
%

%% Math operators etc

\def\N{\mathbb{N}}
\def\Z{\mathbb{Z}}

\def\C{\mathcal{C}}

\def\x{\textbf{x}}
\def\g{\textbf{g}}
\def\h{\textbf{h}}
\def\n{\textbf{n}}

\DeclareMathOperator{\cl}{cl}
\DeclareMathOperator{\pcl}{cl_p}
\DeclareMathOperator{\vcl}{cl_{v}^{\hspace{-1pt}\textit{H}}}

\DeclareMathOperator{\vvcl}{cl_{v}}

\DeclareMathOperator{\Gvcl}{cl_{v}^{\hspace{-1pt}\textit{G}}}

%% Define caligraphic r:

\DeclareMathAlphabet{\mathpzc}{OT1}{pzc}{m}{it}
\newcommand{\rcal}{\mathpzc{r}}

%%%%% Title, authors, affiliation %%%%

\title[Strong conciseness and equationally Noetherian groups]{Strong conciseness and equationally Noetherian groups}

\author[I. de las Heras]{Iker de las Heras} 
\address{Iker de las Heras: Department of Mathematics, Euskal Herriko Unibertsitatea UPV/EHU, Basque Country}
\email{iker.delasheras@ehu.eus}
\date{}

\author[A. Zozaya]{Andoni Zozaya} 
\address{Andoni Zozaya: Fakulteta za Matematiko in Fiziko, University of Ljubljana, Slovenia}
\email{andoni.zozaya@fmf.uni-lj.si}
\date{}

% MSC workaround to get 2020

\makeatletter
\@namedef{subjclassname@2020}{\textup{2020} Mathematics Subject
  Classification}
\makeatother
\subjclass[2020]{20F10, 20F22, 20E18, 20H20, 20F70}
\keywords{conciseness, strong conciseness, equationally Noetherian groups, linear groups, abelian-by-polycyclic groups}
\thanks{The first author is supported by the Spanish Government, grant PID2020-117281GB-I00 (partly with FEDER funds), and by the Basque Government, grant IT483-22.
During a large part of this reasearch, he was also supported by the European Union via the Marie Skłodowska-Curie Actions (MSCA), grant HE-MSCA-PF-GF22/02 101067088.
The second author is supported by the Slovenian Research Agency, programs P1-0222, P1-0294 and grants J1-50001, J1-4351, N1-0216.}

%%%%%
%%%%%

\begin{document}

\begin{abstract}
A word $w$ is said to be concise in a class of groups if, for every $G$ in that class such that the set of $w$-values $w\{G\}$ is finite, the verbal subgroup $w(G)$ is also finite.
In the context of profinite groups, the notion of strong conciseness imposes a more demanding condition on $w$, requiring that $w(G)$ is finite whenever $|w\{G\}|< 2^{\aleph_0}$. 
We investigate the relation between these two properties and the notion of equationally Noetherian groups,
by proving that in a profinite group $G$ with a dense equationally Noetherian subgroup, $w\{G\}$ is finite whenever $|w\{G\}|< 2^{\aleph_0}$.
Consequently, we conclude that every word is strongly concise in the classes of profinite linear groups, pro-$\C$ completions of residually $\C$ linear groups and pro-$\C$ completions of virtually abelian-by-polycyclic groups, thereby extending well-known conciseness properties of these classes of groups.
\end{abstract}
\maketitle

%%%%%

\section{Introduction} 

A \emph{word} $w$ in $k$ variables is an element of the free group on $k$ generators.
For every group $G$, we can interpret a word as a map $ w \colon G^k \rightarrow G$ defined by substitution of group elements for variables.
Verbal problems in groups study the relationship between the set of \emph{$w$-values} in $ G $, defined as
\[
w\{G \}  = \{ w(g_1, \dots, g_k) \mid g_i \in G\},
\]
and the \emph{verbal subgroup} of $ w $ in $ G $, defined as $w(G) = \langle w \{ G \} \rangle $. Consistent with the terminology introduced by P. Hall, we say that $ w $ is \emph{concise} in a class of groups $\mathfrak{C}$ if, for all $G$ in $\mathfrak{C}$, the verbal subgroup $w(G)$ is finite whenever $ w\{G\} $ is finite.

In the context of profinite groups, this notion has been recently extended (\textit{cf.}~\cite{DKS, DMS}): we say that $ w $ is \emph{strongly concise} in a class $ \mathfrak{C} $ of profinite groups if, for any $ G $ in $ \mathfrak{C}$, the subgroup $ w(G) $ is finite whenever $ |w\{G\}| < 2^{\aleph_0} $. The study of conciseness and strong conciseness has  drawn significant attention in recent years (\textit{cf.}~\cite{Be, DPS, FePi, KS, Pinto, A2}).

Even though P.~Hall conjectured that every word is concise in the class of all groups, this was proved false by Ivanov in~\cite{Ivanov}, by building an example of a word that is not concise in a certain non-residually finite group.
However, it remains unknown whether all words are concise -- or even strongly concise -- in the class consisting of all profinite groups.

Several classes of groups are known to have the property that every word is concise on them.
For instance, every word is concise in the class of linear groups (Merzjalkov~\cite{Me672}), or in the class of residually finite groups all of their quotients are residually finite (Turner-Smith~\cite{TS}).
In particular, it follows from the latter that every word is concise in the classes of virtually abelian-by-nilpotent or virtually abelian-by-polycyclic groups.

 Among the aforementioned classes, strong conciseness 
 has been so far only proved for virtually nilpotent profinite groups, meaning that every word is strongly concise in this class of groups (\textit{cf.}~\cite[Theorem 1.1]{De23} and \cite[Theorem 1.2]{DKS}).
 In this article we establish the same property for profinite analogues of two previously mentioned classes: profinite linear groups (profinite groups with an embedding into matrix groups), and pro-$\C$ completions of either linear or abelian-by-polycyclic groups. Here and henceforth, $\C$ refers to a \emph{formation} of finite groups -- a class of finite groups closed under taking quotients and subdirect products of its members. This study is based on the relationship between conciseness, strong conciseness, and the notion of equationally Noetherian groups, a key concept in algebraic geometry over groups; we refer to Section~\ref{sec: preliminaries} for the definition. We remark that the connection between equational Noetherianity and residual finiteness has already been studied in~\cite{Val}.

 \medskip

The main result of this paper is the following:

\begin{theorem}
\label{thm: eq noetherian}
Let $G$ be a profinite group and let $w$ be a word such that $|w\{G\}|<2^{\aleph_0}$.
Suppose that $G$ has an equationally Noetherian subgroup that is dense in $G$ with respect to the profinite topology. Then  $w\{ G \}$ is finite.
\end{theorem}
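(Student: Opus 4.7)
My plan is to argue by contradiction, reducing first to a statement purely about $w\{H\}$, where $H$ denotes the given dense equationally Noetherian subgroup.

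Since the word map $w\colon G^k\to G$ is continuous and $G^k$ is compact, the set $w\{G\}$ is closed in $G$; since $H^k$ is dense in $G^k$, the set $w\{H\}$ is dense in $w\{G\}$, and hence $w\{G\}=\overline{w\{H\}}$. In particular, if $w\{H\}$ were finite it would already be closed, giving $w\{G\}=w\{H\}$ finite. So it suffices to establish that, under the hypothesis $|w\{G\}|<2^{\aleph_0}$, the set $w\{H\}$ is finite. Equivalently, assuming $w\{H\}$ to be infinite, I aim to derive a contradiction by producing $2^{\aleph_0}$ distinct elements of $w\{G\}$.

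To this end, I exploit the profinite structure of $G$: one has $w\{G\}\cong\varprojlim_{N\trianglelefteq_o G}w\{G/N\}$ with surjective bonding maps, and, since $H$ is dense, each quotient map $w\{H\}\to w\{G/N\}$ is surjective. The plan is to construct a Cantor scheme: a family of clopen sets $\{U_s:s\in 2^{<\omega}\}$ of $G$, nested ($U_{s\frown i}\subseteq U_s$), with disjoint children ($U_{s\frown 0}\cap U_{s\frown 1}=\emptyset$), each $U_s\cap w\{H\}$ infinite, and with diameters (measured via the fundamental system of open normal subgroups) tending to $0$ along each infinite branch. Each $\xi\in 2^\omega$ will then determine a distinct element $\bigcap_n U_{\xi|_n}\in w\{G\}$, producing $2^{\aleph_0}$ distinct elements of $w\{G\}$, the desired contradiction.

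The heart of the argument is the inductive splitting step: given a clopen $U\subseteq G$ with $U\cap w\{H\}$ infinite, one must produce two disjoint clopen subsets of $U$ whose intersections with $w\{H\}$ are both still infinite. The obstruction is the possibility that $U\cap w\{H\}$ behaves like a sequence in $G$ converging to a single accumulation point. Ruling out this ``convergent sequence'' scenario is where equational Noetherianity of $H$ becomes indispensable: the fibers $V_h=\{x\in H^k : w(x)=h\}$ for $h\in w\{H\}$ are algebraic subsets of $H^k$, and a putative convergence of $U\cap w\{H\}$ to a single point in $G$ should translate, via the identifications $H/(H\cap N)\cong G/N$, into a configuration of these algebraic sets incompatible with the descending chain condition on Zariski-closed subsets of $H^k$ furnished by equational Noetherianity. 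The main technical hurdle I anticipate is turning this intuition into a rigorous argument: namely, extracting from a hypothetical convergence an explicit system of equations over $H$ whose equivalence to a finite subsystem directly contradicts the infiniteness of $w\{H\}\cap U$.
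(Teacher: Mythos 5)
Your preliminary reductions are sound: $w\{G\}$ is closed, $w\{H\}$ is dense in it, and a Cantor scheme with the properties you list would indeed yield $2^{\aleph_0}$ elements of $w\{G\}$. But the proposal stops exactly where the theorem begins. The splitting step --- given a clopen $U$ with $U\cap w\{H\}$ infinite, produce two disjoint clopen subsets of $U$ each meeting $w\{H\}$ in an infinite set --- is not a technical detail to be filled in later; it is the entire content of the statement. An infinite closed subset of a profinite group can perfectly well be a convergent sequence (homeomorphic to $\omega+1$, hence countable), so only a genuine group-theoretic input can exclude the scenario you describe, and your proposed use of equational Noetherianity is at this point only a heuristic: the fibres $V_h=\{\mathbf{x}\in H^k : w(\mathbf{x})=h\}$ for distinct $h\in w\{H\}$ are pairwise disjoint algebraic sets, and a profinitely convergent sequence of $w$-values does not by itself produce any descending chain of $H$-verbally closed subsets of $H^k$, nor an infinite system of equations that fails to be equivalent to a finite subsystem. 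You would also need to explain how the profinite topology of $G$ (where the hypothetical convergence lives) interacts with the verbal topology of $H$ (where the chain condition lives); these are very different topologies, and bridging them is where the real work lies. As written, the proof is incomplete at its crucial point.

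For comparison, the paper never argues on the image side. Equational Noetherianity enters through the verbal topology on $H$: the irreducible ($H$-verbally connected) component $H_0$ of $H$ is a normal subgroup of finite index (\cite[Lemma 5.2]{Weh}). Then, using in packaged form the fact your Cantor scheme would essentially be re-proving --- a continuous map between profinite spaces with image of size $<2^{\aleph_0}$ is constant on a nonempty open set (\cite[Proposition 2.1]{DKS}) --- one shows that each generalised word map $\mathbf{g}\mapsto w(\mathbf{x}\cdot\mathbf{g})\,w(\mathbf{x})^{-1}$, $\mathbf{x}\in H^k$, is trivial on an open box of $\pcl(H_0)^k$; the $H$-verbal irreducibility of $H_0$, via Lemma~\ref{lem: product of closures} and the fact that cosets of a finite-index subgroup are verbally closed, upgrades this to triviality on all of $H_0^k$, so that $\pcl(H_0)$ is marginal for $w$ in $G$ (Proposition~\ref{prop: connected implies marginal}), whence $|w\{G\}|\le |G:\pcl(H_0)|^k$ is finite. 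If you wish to salvage your approach, note that your missing splitting step is essentially equivalent to this marginality statement, so expect to need irreducibility of the component (not merely a chain condition on fibres) together with an argument of the above type.
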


As a result, since linear groups are well-known to be equationally Noetherian (\textit{cf.} \cite[Theorem 3.5]{Bry77}), we extend Merzjalkov's Theorem as follows:

\begin{corollary}
    Every word is strongly concise in the class of profinite linear groups.
\end{corollary}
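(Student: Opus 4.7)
The plan is to combine Theorem~\ref{thm: eq noetherian} with Merzjalkov's classical conciseness theorem, which is cited earlier in the excerpt. Let $G$ be a profinite linear group and let $w$ be a word with $|w\{G\}|<2^{\aleph_0}$. Since $G$ is, in particular, a linear group, the result of Bryant~\cite[Theorem 3.5]{Bry77} quoted above says that $G$ is itself equationally Noetherian. Trivially, $G$ is dense in $G$ with respect to the profinite topology, so one may apply Theorem~\ref{thm: eq noetherian} with dense equationally Noetherian subgroup taken to be $G$ itself; the conclusion is that $w\{G\}$ is finite.

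Once the set of $w$-values has been shown to be finite, we are back in the setting of classical conciseness. By Merzjalkov's theorem~\cite{Me672}, every word is concise in the class of linear groups; since $G$ is linear and $w\{G\}$ is finite, this immediately yields that the verbal subgroup $w(G)=\langle w\{G\}\rangle$ is finite, which is exactly what is required for $w$ to be strongly concise on $G$.

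There is really no significant obstacle in this argument: the heavy lifting is done entirely by Theorem~\ref{thm: eq noetherian}, and Merzjalkov's theorem plays the role of a bridge from finiteness of $w\{G\}$ to finiteness of $w(G)$. The only mild point worth flagging is that Theorem~\ref{thm: eq noetherian} asks for a dense equationally Noetherian \emph{subgroup} rather than for $G$ to be equationally Noetherian itself, but in the profinite linear setting the whole group $G$ already fulfills this role, so no additional density or approximation argument is needed.
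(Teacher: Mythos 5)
Your argument is correct and coincides with the one the paper intends: apply Theorem~\ref{thm: eq noetherian} with the dense equationally Noetherian subgroup taken to be the (linear, hence equationally Noetherian by Bryant) group $G$ itself to get $|w\{G\}|<\infty$, and then invoke Merzljakov's conciseness theorem for linear groups to pass from finiteness of $w\{G\}$ to finiteness of $w(G)$. No gaps.
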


Furthermore, Theorem~\ref{thm: eq noetherian} also applies to pro-$\C$ completions of residually $\C$ linear groups, which might not be linear themselves. For instance, it is still unknown whether non-abelian free pro-$p$ groups are linear over any field (\textit{cf.} \cite[Conjecture 3.8]{LuSh94}).

\begin{corollary}
\label{cor: completion of linear}
    Every word is strongly concise in the class of groups consisting of pro-$\C$ completions of residually $\C$  linear groups. 
\end{corollary}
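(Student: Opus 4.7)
The plan is to combine Theorem~\ref{thm: eq noetherian} with Merzjalkov's classical conciseness result for linear groups. Let $G$ be the pro-$\mathcal{C}$ completion of a residually $\mathcal{C}$ linear group $H$. Since $H$ is residually $\mathcal{C}$, the canonical map $H\to G$ is injective with dense image, so I identify $H$ with its image in $G$. Because $H$ is linear, it is equationally Noetherian by Bryant's theorem~\cite[Theorem~3.5]{Bry77}, making $H$ a dense equationally Noetherian subgroup of $G$. Assuming that $|w\{G\}|<2^{\aleph_0}$, Theorem~\ref{thm: eq noetherian} applies directly and yields that $w\{G\}$ is finite.

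Strong conciseness, however, demands finiteness of the verbal subgroup $w(G)=\langle w\{G\}\rangle$ rather than of the value set alone, so one short additional step is needed. The argument I would use goes as follows: continuity of the word map $w\colon G^k\to G$ together with the density of $H^k$ in $G^k$ gives $w\{G\}\subseteq\overline{w\{H\}}$; since $w\{H\}\subseteq w\{G\}$ is finite, it is closed in the Hausdorff group $G$, whence $\overline{w\{H\}}=w\{H\}$ and therefore $w\{G\}=w\{H\}$. Consequently, as abstract subgroups of $G$, $w(G)=\langle w\{H\}\rangle=w(H)$, and Merzjalkov's theorem~\cite{Me672} applied to the linear group $H$ gives that $w(H)$ is finite, hence so is $w(G)$.

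The only genuine obstacle in this scheme is Theorem~\ref{thm: eq noetherian} itself; once it is available, the remaining steps rely only on continuity of word maps in profinite groups, closedness of finite sets in a Hausdorff space, and the classical conciseness of linear groups. The plan essentially combines two distinct facts about $H$: its equational Noetherianity, which together with density forces $w\{G\}$ to be finite via Theorem~\ref{thm: eq noetherian}, and its conciseness as a linear group, which upgrades this finiteness from the value set $w\{G\}$ to the verbal subgroup $w(G)$.
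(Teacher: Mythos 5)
Your proposal is correct and takes essentially the same route as the paper: Theorem~\ref{thm: eq noetherian} applied to the dense linear (hence, by Bryant, equationally Noetherian) subgroup $H$ gives finiteness of $w\{G\}$, and Merzjalkov's conciseness theorem for linear groups together with density then upgrades this to finiteness of $w(G)$. The only cosmetic difference is that you deduce $w\{G\}=w\{H\}$ from the closure inclusion $w\{G\}\subseteq \cl(w\{H\})$ given by continuity of the word map, whereas the paper reaches the same equality $w(G)=w(H)$ via the sequence argument of Lemma~\ref{lem: sequences}.
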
 

Finitely generated abelian-by-polycyclic groups are residually finite (\textit{cf.}~\cite[\S 7.2]{LeRo04}) and equationally Noetherian (\textit{cf.} \cite[Theorem 1.3]{Val}). Thus Theorem \ref{thm: eq noetherian} implies that every word is strongly concise in any profinite group containing a dense subgroup of this kind. In the following theorem, we extend this result to (non-necessarily finitely generated) virtually abelian-by-polycyclic groups, although it remains unknown whether they are equationally Noetherian. 

\begin{theorem}
\label{thm: abelian-by-polycyclic}
Let $G$ be a profinite group, and suppose that $G$ has a dense virtually abelian-by-polycyclic subgroup.
Then every word is strongly concise in $G$.
\end{theorem}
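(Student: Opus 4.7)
The plan is to exploit the abelian-by-polycyclic structure of the dense subgroup, reducing modulo the closure of the abelian part, applying Corollary~\ref{cor: completion of linear} to the (pro-)polycyclic quotient, and then controlling the intersection with the abelian piece using Fox calculus and module-theoretic tools.

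By a Hall-type reduction, strong conciseness for every word transfers from a closed subgroup of finite index to the whole profinite group (by expressing arbitrary $w$-values as values of finitely many derived words on the subgroup, modulo finitely many coset representatives). We may therefore assume $H$ itself is abelian-by-polycyclic, with normal abelian subgroup $A \trianglelefteq H$ and polycyclic quotient $H/A$. Setting $N = \overline{A}$ and $Q = G/N$, the closed normal subgroup $N$ of $G$ is abelian, and $Q$ is a profinite group containing the polycyclic (hence linear and residually finite) group $H/A$ densely. Thus $Q$ is the pro-$\C$ completion of a polycyclic group (a suitable residually-finite quotient of $H/A$) with $\C$ the formation of all finite groups, so Corollary~\ref{cor: completion of linear} yields strong conciseness in $Q$. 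Since $|w\{Q\}| \le |w\{G\}| < 2^{\aleph_0}$, the subgroup $w(Q) = w(G)N/N$ is finite.

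It thus suffices to show $w(G) \cap N$ is finite, which together with the previous step forces $w(G)$ to be an extension of two finite groups, hence finite. This is the main obstacle. Since $N$ is abelian and $G$ acts on it through $Q$, the intersection $w(G) \cap N$ carries the structure of a $\widehat{\mathbb{Z}}[[Q]]$-submodule of $N$. Via Fox calculus, for each fixed $\bar{\mathbf{g}} \in Q^k$ and any lift $\tilde{\mathbf{g}} \in G^k$, the values $\{ w(\tilde{\mathbf{g}} \mathbf{n}) : \mathbf{n} \in N^k \}$ form an affine translate of the closed $\widehat{\mathbb{Z}}[[Q]]$-submodule of $N$ generated by the Fox derivatives $(\partial_i w)(\bar{\mathbf{g}})$ for $1 \le i \le k$. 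To see these finitely generated submodules are in fact finite, we write $H = \bigcup_n H_n$ as a directed union of finitely generated subgroups, each equationally Noetherian by \cite[Theorem~1.3]{Val}; Theorem~\ref{thm: eq noetherian} applied to $\overline{H_n}$ gives $w\{\overline{H_n}\}$ finite. The Noetherianity of $\mathbb{Z}[H/A]$ (from polycyclicity) combined with these local bounds and the constraint $|w\{G\}| < 2^{\aleph_0}$ forces each such submodule to be finite, completing the proof.
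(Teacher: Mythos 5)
Your overall architecture (quotient by the closure of the abelian part, handle the quotient by equational Noetherianity of polycyclic groups, then control the abelian kernel by module theory) is reasonable in spirit, but the decisive step is missing. The final paragraph, which is supposed to prove that $w(G)\cap N$ is finite, is not an argument: you list ingredients (Fox calculus, Noetherianity of $\mathbb{Z}[H/A]$, finiteness of $w\{\pcl(H_n)\}$, the hypothesis $|w\{G\}|<2^{\aleph_0}$) but never derive the conclusion from them. Concretely: (i) even if, for each fixed $\bar{\mathbf g}\in Q^k$, the fibre $\{w(\tilde{\mathbf g}\mathbf n):\mathbf n\in N^k\}$ is a coset of a finite submodule, there are up to $2^{\aleph_0}$ choices of $\bar{\mathbf g}$, so no finiteness of $w\{G\}\cap N$, let alone of the subgroup $w(G)\cap N$, follows without a uniformity statement; (ii) Noetherianity of $\mathbb{Z}[H/A]$ only gives finite generation of submodules, never finiteness, and $N=\pcl(A)$ is a module over a completed group algebra of a profinite quotient, not over $\mathbb{Z}[H/A]$, so the bridge is unexplained; (iii) writing $H=\bigcup_n H_n$ and applying Theorem~\ref{thm: eq noetherian} to each $\pcl(H_n)$ gives finiteness of each $w\{\pcl(H_n)\}$, but $G\neq\bigcup_n\pcl(H_n)$ (only the union of the $H_n$ is dense) and the bounds $|w\{\pcl(H_n)\}|$ need not be uniform, so these ``local bounds'' do not combine into anything global. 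This is exactly the point where the paper does genuine work: it shows (Proposition~\ref{prop: connected inside abelian} and Corollary~\ref{cor: finite index connected}, Bryant's argument) that the $H$-verbally connected component $A_0$ has finite index in $A$, and then (Proposition~\ref{prop: connected implies marginal}, where the hypothesis $|w\{G\}|<2^{\aleph_0}$ enters via the Detomi--Klopsch--Shumyatsky lemma) that $\pcl(A_0)$ is \emph{marginal} for $w$ in $G$. Marginality is the uniformity you are missing: it makes $w\{G\}=w\{\pcl(\langle T\rangle)\}$ for a single finitely generated $\langle T\rangle$ with $H=\langle T\rangle A$ (possible since $H/A$ is virtually polycyclic, hence finitely generated), and Theorem~\ref{thm: eq noetherian} then finishes. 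Your proposal contains no substitute for this marginality step.

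Two further, smaller problems. First, your opening ``Hall-type reduction'' from the virtually abelian-by-polycyclic case to the abelian-by-polycyclic case is asserted, not proved: values $w(t_{i_1}y_1,\ldots,t_{i_k}y_k)$ with the $t_i$ in a transversal are values of \emph{generalised} words (with constants) on the finite-index subgroup, not of genuine words, so strong conciseness of all words in the subgroup does not apply directly; it is not a known formality that strong conciseness of all words passes to finite extensions. The paper sidesteps this entirely via Lemma~\ref{lem: existence of abelian normal subgroup}, which produces an abelian $A\trianglelefteq H$ with $H/A$ virtually polycyclic, no reduction needed. Second, $Q=G/N$ need not be the pro-$\C$ completion of a polycyclic group; it merely has a dense polycyclic subgroup (the image of $H/A$), so Corollary~\ref{cor: completion of linear} does not literally apply --- though this is easily repaired by invoking Theorem~\ref{thm: eq noetherian} together with Lemma~\ref{lem: sequences} directly. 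Neither of these is fatal on its own, but the gap described above is.
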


As a consequence, we obtain:

\begin{corollary}
Every word is strongly concise in the class of:
\begin{itemize}
\item[\textup{(i)}] pro-$\C$ completions of virtually abelian-by-polycyclic groups,
\item[\textup{(ii)}] profinite virtually abelian-by-(finitely generated nilpotent) groups.
\end{itemize}
\end{corollary}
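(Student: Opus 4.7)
The plan is to reduce both parts of the corollary to Theorem~\ref{thm: abelian-by-polycyclic}, which guarantees that every word is strongly concise in any profinite group admitting a dense virtually abelian-by-polycyclic subgroup. Thus, in each case, it suffices to exhibit such a dense subgroup inside the profinite group under consideration.

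For part~(i), let $H$ be a virtually abelian-by-polycyclic group and set $G = \hat{H}_{\C}$. By construction of the pro-$\C$ completion, the canonical homomorphism $H \to G$ has dense image, and this image is isomorphic to a quotient of $H$. It therefore remains to observe that the class of virtually abelian-by-polycyclic groups is closed under quotients. If $H$ has a finite-index subgroup $H_0$ sitting in a short exact sequence $1 \to A \to H_0 \to Q \to 1$ with $A$ abelian and $Q$ polycyclic, then for any normal $N \triangleleft H$ the image $H_0 N / N$ still has finite index in $H/N$, and it fits in an extension with abelian kernel $A/(A \cap N)$ and polycyclic cokernel (a quotient of $Q$). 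Hence $H/N$ is itself virtually abelian-by-polycyclic, and Theorem~\ref{thm: abelian-by-polycyclic} applied to $G$ with this dense subgroup delivers the conclusion.

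For part~(ii), let $G$ be a profinite group which, as an abstract group, is virtually abelian-by-(finitely generated nilpotent). Since every finitely generated nilpotent group is polycyclic, $G$ itself is virtually abelian-by-polycyclic, and so trivially contains a dense virtually abelian-by-polycyclic subgroup (namely $G$). Theorem~\ref{thm: abelian-by-polycyclic} applies directly. No real obstacle arises in either case: the substantive work has already been done in Theorem~\ref{thm: abelian-by-polycyclic}, and the corollary is essentially a packaging statement that exchanges the abstract density hypothesis for the two concrete classes in the statement. The only things to verify are the elementary closure of virtually abelian-by-polycyclic groups under quotients (for~(i)) and the inclusion of finitely generated nilpotent groups in the polycyclic class (for~(ii)).
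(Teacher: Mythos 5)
Part (i) of your argument is correct and is essentially the intended one: the canonical image of $H$ in its pro-$\C$ completion is dense and is a quotient of $H$, and your verification that the class of virtually abelian-by-polycyclic groups is closed under quotients is sound (the image of $A$ is abelian and normal in the image of $H_0$, and the corresponding quotient is a quotient of $Q$), so Theorem~\ref{thm: abelian-by-polycyclic} applies directly.

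Part (ii), however, has a genuine gap: the paper states explicitly, immediately after the corollary, that \emph{finitely generated} is to be understood as \emph{topologically} finitely generated. So in item (ii) the group $G$ has a finite-index closed subgroup $K$ with a closed abelian normal subgroup $A$ such that $K/A$ is a topologically finitely generated nilpotent \emph{profinite} group. Such a quotient, e.g.\ $\Z_p$, is in general not polycyclic as an abstract group, so your claim that ``$G$ itself is virtually abelian-by-polycyclic'' fails (take, say, an extension of an infinite abelian profinite group by $\Z_p$), and you cannot invoke Theorem~\ref{thm: abelian-by-polycyclic} with $G$ itself playing the role of the dense subgroup; as written, your argument only proves the much weaker statement about profinite groups that are \emph{abstractly} virtually abelian-by-polycyclic. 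The intended route is to manufacture a dense abstract subgroup of the right type: pick finitely many elements $h_1,\dots,h_d \in K$ whose images topologically generate $K/A$ and a transversal $t_1,\dots,t_m$ of $K$ in $G$, and consider $D=\langle A, h_1,\dots,h_d,t_1,\dots,t_m\rangle$, which is dense in $G$; one then checks that $D\cap K$ has finite index in $D$, contains $A$ as an abelian normal subgroup, and that $(D\cap K)/A$ is a finitely generated subgroup of the nilpotent group $K/A$, hence polycyclic --- this is where your observation that finitely generated nilpotent groups are polycyclic actually enters, applied to this abstract subgroup rather than to $G$. Verifying the finite generation of $(D\cap K)/A$ (for instance after a reduction in the spirit of Lemma~\ref{lem: existence of abelian normal subgroup} making the abelian subgroup normal in $G$, so that Schreier's argument applies to $D/A$) is the real, if modest, content of part (ii), and it is missing from your proposal.
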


In the previous result,  \textit{finitely generated} is understood as topologically finitely generated. The following remains open:

\begin{question}
Is every word strongly concise in the class of virtually abelian-by-polyprocyclic groups?
\end{question}

The question falls outside the techniques of this paper, as a polyprocyclic group may not contain a dense polycyclic subgroup. For instance, for an odd prime $p$, the pro-$p$ group
$G = \Z_p \rtimes \Z_p$, topologically generated by $x$ and $y$, and with the action given by $x^y = x^2$, does not contain any dense metacyclic subgroup.  However, every word is strongly concise in the class of virtually polyprocyclic pro-$p$ groups, which includes the group in the previous example (\textit{cf.} \cite[Theorem 1.2]{JZ08}).

\medskip

Finally, abelian-by-nilpotent groups may not be nor equationally Noetherian (\textit{cf.}~\cite[Propositions 4.1 \&  5]{BMR}) nor abelian-by-polycyclic, and, therefore, the following remains open as well:

\begin{question}
Is every word strongly concise in the class of profinite virtually abelian-by-nilpotent groups?
\end{question}

%%%%%%%%%%%%%%%%%%%%%%%%%%%

\section{Preliminaries}
\label{sec: preliminaries}
%%%%%%%%%%%%%%%%%%%%%%%%%%%

Given a group $G$  and the free group on $k$ variables $ F_k = F(x_1, \dots, x_k)$, every element $w$ of the free-product $ G * F_k $ can be represented using the variables $ x_1, \dots, x_k $ and, if necessary, additional constants $ a_1, \dots, a_{\ell} $ from $ G $. That is,
\[
w = \hat{w}(a_1, \ldots, a_{\ell}, x_1, \ldots, x_k)
\]
for a suitable group word $ \hat{w}$  in $ l + k $ variables.
By analogy with word maps, $ w  $ defines in $G$ a so-called \emph{generalised word map}
\[
w \colon G^k \to G, \quad (g_1, \dots, g_k) \mapsto w(g_1, \dots, g_k) = \hat{w}(a_1, \dots, a_{\ell}, g_1, \dots, g_k).
\]
The \emph{verbal topology} on $ G^k $ is the topology whose sub-basis for the closed sets consists of the \emph{solution sets}
\[
V_G(S) = \left\{ (g_1, \dots, g_k) \in G^k \mid w(g_1, \dots, g_k) = 1 \text{ for all } w \in S \right\},
\]
where $ S \subseteq G * F_k $. We say that $ G $ is \emph{equationally Noetherian} if, for all $k$, any descending chain of verbally closed subsets in $G^k$ stabilizes (\textit{cf.}~\cite[Lemma 3.1]{Val} for alternative characterizations).
A straightforward consequence, which will play a central role in the article, is that the connected component of the identity in $G$ with respect to the verbal topology, also known as the \emph{irreducible component} of $ G$, is a normal irreducible subgroup of finite index (\textit{cf.} \cite[Lemma 5.2]{Weh}).

\medskip

Before delving further, we should establish some terminology. Given a set $X$ contained in two distinct groups $G^k$ and $H^k$, one may consider the verbal topology on both $G^k$ and $H^k$.
To avoid confusion, we will denote them respectively as the $G$-verbal and $H$-verbal topologies. Furthermore, in a profinite group $G$, these verbal topologies will be frequently used simultaneously with the original profinite topology of $G$.
Therefore, topological concepts will be clarified with a descriptor indicating the topology being used: profinite-continuous, verbally connected, $H$-verbally closed, etc.

Finally, given a subset $ X \subseteq G$, its \emph{profinite closure} in $G$ will be denoted by $ \pcl(X) $, while the \emph{verbal closure in $G$} will be denoted by $ \vvcl(X)$, or $\Gvcl(X)$ when we want to specify the group.

\medskip

We will require the following lemma regarding the verbal closure of finite Cartesian products: 

\begin{lemma}
\label{lem: product of closures}
Let $G$ be a group and let $L_1, \ldots, L_k \subseteq G$.
Then
\[ \vvcl(L_1 \times \cdots\times L_k) = \vvcl(L_1) \times\cdots\times \vvcl(L_k).\]
\end{lemma}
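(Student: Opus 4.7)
The plan is to prove both inclusions by exploiting the continuity of two natural families of maps in the verbal topologies: the coordinate projections $\pi_i \colon G^k \to G$ and the ``slice inclusions'' $\iota_i \colon G \to G^k$ that fix all coordinates but the $i$-th.

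For the inclusion $\vvcl(L_1 \times \cdots \times L_k) \subseteq \vvcl(L_1) \times \cdots \times \vvcl(L_k)$, I would first observe that each $\pi_i$ is verbally continuous: any $w \in G * F_1$ can be reinterpreted as an element of $G * F_k$ involving only the $i$-th variable, so $\pi_i^{-1}(V_G(w))$ is a sub-basic closed set in $G^k$. Consequently $\pi_i^{-1}(\vvcl(L_i))$ is verbally closed in $G^k$, and
\[
\vvcl(L_1) \times \cdots \times \vvcl(L_k) = \bigcap_{i=1}^{k} \pi_i^{-1}(\vvcl(L_i))
\]
is a verbally closed subset of $G^k$ containing $L_1 \times \cdots \times L_k$. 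The desired inclusion follows immediately.

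For the reverse inclusion, my plan is to iterate the following observation: for any fixed $(h_1,\ldots,h_{i-1},h_{i+1},\ldots,h_k) \in G^{k-1}$, the map $\iota_i$ inserting its argument in the $i$-th slot is verbally continuous, since substituting the $h_j$'s (for $j\neq i$) into a generalised word in $G * F_k$ produces a generalised word in $G * F_1$. Starting from $L_1 \times \cdots \times L_k \subseteq \vvcl(L_1 \times \cdots \times L_k)$, applying $\iota_1$ with $(h_2,\ldots,h_k)$ ranging over $L_2 \times \cdots \times L_k$ and invoking continuity yields $\vvcl(L_1) \times L_2 \times \cdots \times L_k \subseteq \vvcl(L_1 \times \cdots \times L_k)$. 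Iterating once per coordinate---at the $j$-th step using tuples whose first $j-1$ slots lie in $\vvcl(L_1) \times \cdots \times \vvcl(L_{j-1})$ and whose last $k-j$ slots lie in $L_{j+1} \times \cdots \times L_k$, and using that the verbal closure of a verbally closed set is itself---yields the conclusion after $k$ steps.

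I do not anticipate a significant obstacle. The only subtlety is to keep the verbal topologies on $G$ and on $G^k$ conceptually separate, and to notice that it is precisely the freedom to move group elements between the ``constant'' and ``variable'' roles in a generalised word that makes both $\pi_i$ and $\iota_i$ verbally continuous.
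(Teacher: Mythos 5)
Your proposal is correct and is essentially the paper's argument: the verbal continuity of your slice maps $\iota_i$ is exactly the paper's observation that substituting a fixed $g$ into one variable of a generalised word $w_{ij}(x_1,x_2)\in G*\langle x_1,x_2\rangle$ yields a generalised word in the remaining variable, and your coordinate-by-coordinate iteration replaces the paper's induction on $k$ plus the two symmetric substitution steps. Your projection argument merely spells out the inclusion the paper dismisses as clear, so there is no genuinely different idea here.
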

\begin{proof}
By an inductive argument, it suffices to prove the result for $k=2$.
    It is clear that $\vvcl(L_1\times L_2)\subseteq \vvcl(L_1)\times\vvcl(L_2)$.
    In order to prove the other inclusion, write 
    \[ \vvcl(L_1 \times L_2) = \bigcap_{i \in I} \bigg(\bigcup_{j \in J_i} V_G\left(w_{ij}(x_1, x_2) \right)\bigg)\]
for some set $I \subseteq \N$, finite sets $J_i \subseteq \N$ and generalised words $w_{ij}(x_1, x_2) \in G * \langle x_1, x_2 \rangle$.
For each $g \in L_2$, we have $w_{ij}(x_1, g) \in G * \langle x_1 \rangle$
and that
\[
\vvcl(L_1 ) \subseteq \bigcap_{i \in I}\bigg( \bigcup_{j \in J_i} V_G(w_{ij}(x_1, g))\bigg).
\]
Thus, $\vvcl(L_1) \times \{g\} \subseteq \vvcl(L_1 \times L_2),$ and since $g$ was arbitrary,
\[ \vvcl(L_1) \times L_2 \subseteq \vvcl(L_1 \times L_2).\]
Following the same line of reasoning with $g \in \vvcl(L_1)$, we prove that $\{ g \} \times \vvcl(L_2) \subseteq \vvcl(L_1 \times L_2)$, and we therefore obtain the desired equality.
\end{proof}

Let $w$ be a word in $k$ variables and $K,H \leq G$. We say that $K$ is \emph{marginal} for $w$ in $H$ if for every index $i \in \{1, \ldots, k\}$, every  $ g \in K$ and every $\h=(h_1,\ldots, h_k) \in H^k$ we have
\[ w(h_1,\ldots,h_{i-1},g h_i,h_{i+1},\ldots,h_k) = w(h_1,\ldots,h_{i-1},h_i g, h_{i+1}, \ldots, h_k) = w(\h),
\]
where $w(\h)$ stands for $w(h_1, \dots, h_k).$ 
In other words, $K$ is marginal for $w$ in $H$ if for every $i \in \{1, \dots, k\}$ and every tuple $\h = (h_1, \dots, h_k) \in H^{k}$, the maps
    \begin{align*}
        l_{\h}^i: G& \longrightarrow G\\
        g&\longmapsto w(h_1,\ldots, h_{i-1},g h_i, h_{i+1},\ldots, h_k) \cdot w(\h)^{-1}    
    \end{align*}
    and
    \begin{align*}
        r_{\h}^i:G&\longrightarrow G\\
        g&\longmapsto w(h_1,\ldots,h_{i-1}, h_ig, h_{i+1},\ldots, h_k) \cdot w(\h)^{-1}
    \end{align*}
  satisfy $l_\h^i(K) = r_\h^{i}(K) = \{ 1 \}$. 

\begin{lemma}
    \label{lem: closures preserve marginality}
    Let $G$ be a Hausdorff topological group and let $K,H$ be subgroups of $G$.
    If $K$ is marginal for a word $w$ in $H$, then the closure $\cl(K)$ of $K$ is marginal for $w$ in the closure $\cl(H)$ of $H$.   
\end{lemma}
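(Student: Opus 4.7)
The plan is to run a two-step continuity argument, using the fact that generalized word maps are continuous in any topological group (being compositions of multiplication and inversion) and that singletons are closed in a Hausdorff space.

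First I would fix an index $i \in \{1, \ldots, k\}$ and a tuple $\h = (h_1, \ldots, h_k) \in H^k$, and observe that the auxiliary maps $l_\h^i, r_\h^i \colon G \to G$ are continuous. Since $K$ is marginal for $w$ in $H$, we have $l_\h^i(K) = r_\h^i(K) = \{1\}$, and because $G$ is Hausdorff the set $\{1\}$ is closed. Continuity then yields $l_\h^i(\cl(K)) \subseteq \{1\}$ and $r_\h^i(\cl(K)) \subseteq \{1\}$. So for every $g \in \cl(K)$ and every $\h \in H^k$,
\[
w(h_1,\ldots,gh_i,\ldots,h_k) = w(h_1,\ldots,h_ig,\ldots,h_k) = w(\h).
\]

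Next I would enlarge the second argument from $H^k$ to $\cl(H)^k$. Fix $g \in \cl(K)$ and $i$, and define
\[
\phi_g^i \colon G^k \to G, \qquad (h_1,\ldots,h_k) \mapsto w(h_1,\ldots,gh_i,\ldots,h_k)\cdot w(h_1,\ldots,h_k)^{-1},
\]
and the analogous $\psi_g^i$ using $h_i g$ in place of $gh_i$. Both maps are continuous on $G^k$, since they are built from the continuous group operations on $G$ together with the fixed constants from $G$ appearing in $w$. By the previous paragraph, $\phi_g^i$ and $\psi_g^i$ vanish identically on $H^k$. In a product of Hausdorff spaces one has $\cl(H^k) = \cl(H)^k$, so continuity plus the closedness of $\{1\}$ gives $\phi_g^i(\cl(H)^k) = \psi_g^i(\cl(H)^k) = \{1\}$. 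This is exactly the marginality of $\cl(K)$ for $w$ in $\cl(H)$.

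I expect no serious obstacle: the only points that deserve care are that generalized word maps (which involve constants from $G$) are still continuous — immediate from the continuity of multiplication and inversion — and the identity $\cl(H^k) = \cl(H)^k$ in a topological space with finite products, which is standard. The two-step structure (first move $g$ into $\cl(K)$ with $\h$ in $H^k$, then move $\h$ into $\cl(H)^k$) is essential because marginality is a statement about two independent arguments, and one cannot vary them simultaneously in a single continuous map without first knowing that the equation already holds on $\cl(K) \times H^k$.
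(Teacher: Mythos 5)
Your proof is correct and follows essentially the same two-step continuity argument as the paper: first extend marginality from $K$ to $\cl(K)$ with the tuple fixed in $H^k$ (via the maps $l_\h^i, r_\h^i$ and closedness of $\{1\}$ in a Hausdorff group), then extend from $H^k$ to $\cl(H)^k$ with $g \in \cl(K)$ fixed (via the maps on $G^k$). The only cosmetic difference is that you make the identity $\cl(H^k)=\cl(H)^k$ explicit, which the paper uses implicitly.
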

\begin{proof}
Let $k$ be the number of variables of $w$, and fix $\h=(h_1, \ldots, h_k)$ in $H^k$. Since $K$ is marginal for $w$ in $H$, we have $l_{\h}^i(K) = \{1 \}$ for every $i\in\{1,\ldots,k\}$, and by the continuity of $l_{\h}^i$, since $G$ is Hausdorff, we obtain $l_{\h}^i(\cl(K)) \subseteq$ $\cl(l_\h^{i}(K)) =\{1\}$. In the same fashion $r_{\h}^{i}(\cl(K)) = \{ 1 \}$.   Since $\h$ was chosen arbitrarily in $H^k$, we conclude that $\cl(K)$ is marginal for $w$ in $H$.

Now fix $g \in \cl(K)$ and for each $i\in\{1,\ldots,k\}$ define the maps
\begin{align*}
\ell_{g}^i \colon & \quad G^k   & \longrightarrow \quad & G \\
                  & \quad (g_1, \ldots, g_k) &  \longmapsto \quad & w(g_1, \ldots, g_{i-1}, gg_i, g_{i+1}, \ldots, g_k) \, w(g_1, \ldots, g_k)^{-1}
\end{align*}
    and
\begin{align*}
\rcal_g^i \colon & \quad G^k   & \longrightarrow \quad & G \\
                  & \quad (g_1, \ldots, g_k) &  \longmapsto \quad & w(g_1, \ldots, g_{i-1}, g_i g, g_{i+1}, \ldots, g_k) \, w(g_1, \ldots, g_k)^{-1}.
\end{align*}
 Since $\cl(K)$ is marginal for $w$ in $H$, these maps satisfy $\ell_{ g}^i(H^k)=\rcal_{g}^i(H^k)=\{1\}$, and, as before, by continuity, we obtain $\ell^{i}_g\left(\cl(H)^k \right)=\rcal^{i}_ g(\cl(H)^k)= \{1\}$.
\end{proof}

%%%%%%%%%

\section{Proof of the main result}
\label{sec: eq noetherian}

In this section we prove Theorem \ref{thm: eq noetherian} and deduce Corollary \ref{cor: completion of linear}.
We start stating the following result, which is often key when working on strong conciseness.
Recall that a profinite space is a topological space that is Hausdorff, compact and totally disconnected.

\begin{proposition}[\mbox{\cite[Proposition 2.1]{DKS}}]
\label{prop: DKS}
Let $F \colon X \rightarrow Y$ be a continuous map between non-empty profinite spaces, and suppose that $|F(X)| < 2^{\aleph_0}$.
Then, there exists a non-empty profinite-open subset $U \subseteq X$ such that $F|_U$ is constant.
\end{proposition}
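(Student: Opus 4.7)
The plan is to observe that $F(X)$, as a subspace of $Y$, is itself a profinite space, and then exploit the classical fact that a non-empty perfect profinite space has cardinality at least $2^{\aleph_0}$. Indeed, $F(X)$ is compact as the continuous image of the compact space $X$, and it is Hausdorff and totally disconnected as a subspace of $Y$; in particular, it inherits a basis of clopen subsets.

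The key step is to locate an isolated point of $F(X)$. By the assumption $|F(X)| < 2^{\aleph_0}$, the space $F(X)$ cannot be perfect: otherwise a standard binary-tree construction, splitting each non-empty clopen of $F(X)$ into two disjoint non-empty clopens, would embed the Cantor space $2^{\omega}$ into $F(X)$ and thus produce $2^{\aleph_0}$ distinct points. Hence there exists some $y_0 \in F(X)$ isolated in $F(X)$, and since $Y$ has a basis of clopens we may choose a clopen subset $V \subseteq Y$ with $V \cap F(X) = \{y_0\}$.

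Setting $U = F^{-1}(V)$ then finishes the proof: $U$ is profinite-open by continuity of $F$, it is non-empty because it contains every preimage of $y_0$, and $F|_U$ takes values in $V \cap F(X) = \{y_0\}$, hence is constant.

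The main subtlety, such as it is, lies in justifying the cardinality bound for perfect profinite spaces and in separating the isolated point from the remainder of $F(X)$ by a clopen of $Y$; both are routine consequences of the zero-dimensionality of profinite spaces and the Cantor-style binary-tree argument alluded to above.
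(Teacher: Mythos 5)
Your argument is correct and complete: $F(X)$ is compact, Hausdorff and totally disconnected, hence a profinite space; a non-empty profinite space with fewer than $2^{\aleph_0}$ points cannot be perfect, so it has an isolated point $y_0$, and pulling back a clopen (indeed, any open) neighbourhood of $y_0$ meeting $F(X)$ only in $y_0$ gives the required $U$. Note that the paper itself offers no proof of this statement -- it is quoted from \cite[Proposition 2.1]{DKS} -- so the comparison is really with the original argument there, which runs the same Cantor-style construction on the domain side: assuming $F$ is non-constant on every non-empty open set, one repeatedly splits non-empty clopen subsets of $X$ into two non-empty clopen pieces with disjoint images (by separating two image points by disjoint clopens of $Y$ and taking preimages), and compactness of $X$ then yields $2^{\aleph_0}$ branches with pairwise distinct values, contradicting $|F(X)|<2^{\aleph_0}$. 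Your version isolates the cleaner lemma ``a non-empty perfect profinite space has cardinality at least $2^{\aleph_0}$'' and applies it to the image, which is arguably tidier. One cosmetic remark: the binary-tree construction directly produces $2^{\aleph_0}$ distinct points (one in each of the pairwise disjoint, non-empty branch intersections); claiming an actual topological embedding of the Cantor space $2^{\omega}$ is a stronger assertion that needs more care in the non-metrizable setting, but it is not needed for your conclusion, so this does not affect the validity of the proof.
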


\begin{proposition}
\label{prop: connected implies marginal}
Let $G$ be a profinite group and $w$ a word such that $|w\{G\}| < 2^{\aleph_0}$.
Let $N\trianglelefteq H\le G$. If $N$ is $H$-verbally irreducible, then $\pcl(N)$ is marginal for $w$ in $\pcl(H)$.
\end{proposition}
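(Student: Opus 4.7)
The plan is to reduce the statement to showing that $N$ is marginal for $w$ in $H$; the conclusion then follows by taking profinite closures via Lemma~\ref{lem: closures preserve marginality}. Fix $i \in \{1, \ldots, k\}$ and $\h = (h_1, \ldots, h_k) \in H^k$, and consider the (one-variable) generalised word map
\[
\psi := l_\h^i \colon G \to G, \qquad \psi(g) = w(h_1, \ldots, g h_i, \ldots, h_k) \cdot w(\h)^{-1}.
\]
By symmetry, it suffices to show that $\psi$ vanishes identically on $N$; the case of $r_\h^i$ is identical.

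The restriction of $\psi$ to $\pcl(N) \subseteq \pcl(H)$ is a profinite-continuous map $\psi \colon \pcl(N) \to \pcl(H)$ between profinite spaces, whose image is contained in $w\{G\} \cdot w(\h)^{-1}$, and thus has cardinality strictly less than $2^{\aleph_0}$. By Proposition~\ref{prop: DKS}, there exists a non-empty profinite-open subset $V \subseteq \pcl(N)$ on which $\psi$ takes a single value $c$. Using the basis of open normal subgroups of $\pcl(N)$ and the density of $N$, we may refine $V$ to a coset $v_0 K$ with $K$ open normal in $\pcl(N)$ and $v_0 \in N$; consequently $\psi$ is constant on the finite-index coset $v_0(K \cap N) \subseteq N$.

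The main obstacle is to bootstrap this local constancy to constancy on all of $N$, using the $H$-verbal irreducibility. The plan is to show that $\psi(N)$ is finite, which together with the partition $N = \bigsqcup_{y \in \psi(N)} (\psi^{-1}(y) \cap N)$ into $H$-verbally closed fibers and the irreducibility of $N$ forces $\psi$ to be constant. To establish the finiteness of $\psi(N)$, we expect to iterate Proposition~\ref{prop: DKS} on successive profinite-closed subsets of $\pcl(N)$, combining this with the compactness of $\pcl(N)$ and the density of $N$; the underlying reason is that $\psi(\pcl(N))$ is a compact profinite space of cardinality less than $2^{\aleph_0}$, which by a Cantor--Bendixson-style argument is scattered, so that the isolated points of $\psi(\pcl(N))$ yield clopen fibers over which the analysis can be carried out recursively.

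Once $\psi$ is known to be constant on $N$, the common value must be $\psi(1) = 1$ since $1 \in N$. The analogous argument for $r_\h^i$ gives the complementary condition, so $N$ is marginal for $w$ in $H$, and Lemma~\ref{lem: closures preserve marginality} then delivers the desired marginality of $\pcl(N)$ in $\pcl(H)$.
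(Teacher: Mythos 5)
Your overall frame --- prove that $N$ itself is marginal for $w$ in $H$ and then invoke Lemma~\ref{lem: closures preserve marginality} --- is the same as the paper's, and your first step (applying Proposition~\ref{prop: DKS} to $l_{\h}^i$ restricted to $\pcl(N)$ to obtain constancy on a coset $v_0(K\cap N)$ with $v_0\in N$ and $K\cap N$ of finite index in $N$) is sound. The gap is the bootstrap. You propose to get constancy on all of $N$ by first showing $\psi(N)$ is finite and then partitioning $N$ into finitely many $H$-verbally closed fibers; but that finiteness is exactly what is not available, and your suggested justification does not produce it: a compact Hausdorff space of cardinality less than $2^{\aleph_0}$ is scattered, but scattered compact spaces can be infinite (e.g.\ a convergent sequence), so iterating Proposition~\ref{prop: DKS} together with a Cantor--Bendixson analysis gives no bound on $|\psi(\pcl(N))|$. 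Moreover, irreducibility by itself does not exclude a partition into infinitely many closed fibers (an infinite set with the cofinite topology is irreducible yet is the disjoint union of its closed singletons), so without finiteness the fiber argument collapses; nor does the profinite-open coset help directly, since it need not have nonempty interior in the verbal topology.

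The missing idea --- and the paper's route --- is to exploit the finite index of $K\cap N$ in $N$ at the level of verbal closures, with no finiteness of the image required. Write $N=\bigcup_{t\in T}t(K\cap N)$ with $T$ finite; left translations by elements of $H$ are $H$-verbal homeomorphisms, so $\vcl(N)=\bigcup_{t\in T}t\,\vcl(K\cap N)$, and irreducibility of $N$ (hence of $\vcl(N)$) forces $t\,\vcl(K\cap N)=\vcl(N)$ for every $t$; in particular $\vcl\bigl(v_0(K\cap N)\bigr)=\vcl(N)\supseteq N$. Since $l_{\h}^i$ restricted to $H$ is a generalised word map with constants in $H$, it is $H$-verbally continuous, and singletons are $H$-verbally closed, so the set where it takes its constant value is $H$-verbally closed and therefore contains $\vcl\bigl(v_0(K\cap N)\bigr)$, i.e.\ all of $N$; evaluating at $1\in N$ identifies the constant as $1$, and the same argument handles $r_{\h}^i$. (The paper runs this with the $k$-variable map $\g\mapsto w(\x\cdot\g)\,w(\x)^{-1}$ and Lemma~\ref{lem: product of closures} so as to treat all coordinates at once; your one-variable version works equally well once the bootstrap is repaired as above.)
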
 

\begin{proof}
Let $k\in\N$ be the number of variables of the word $w$.
We fix a tuple $\x $ in $ H^k$, and we define $w_{\x}$ to be the generalised word map
\begin{align*}
    w_{\x} \colon G^k &\longrightarrow G\\
    \g\ &\longmapsto w(\x\cdot \g) w(\x)^{-1},
\end{align*}
where $\x \cdot \g$ denotes the component-wise product of tuples. Observe that $w_{\x}$ is profinite-continuous and that $|w_{\x}\{G\}|<2^{\aleph_0}$.

Let $\mathcal{N}=\pcl(N)$ be the profinite closure of $N$ in $G$. Since $N$ is normal in $H$, the subgroup $\mathcal{N}$ is also normalised by $H$, and thus $w_{\x}\{ \mathcal{N} \}\subseteq \mathcal{N}$. Hence, in view of Proposition~\ref{prop: DKS}, there exists a tuple $\n =(n_1,\ldots, n_k)$ in $N^k$, and a profinite-open subgroup $\mathcal{M} \leq \mathcal{N}$ (in particular $|\mathcal{N}: \mathcal{M}|$ is finite) such that $w_{\x}$ is constant in $n_1 \mathcal{M}\times \cdots \times n_k\mathcal{M}$, that is,
$$
w_{\x}(\n \cdot \g ) = w_{\x}(\n)\in N
$$
for every $\g \in \mathcal{M}^k$. 

Since $\x\in H^k$, the restriction $w_{\x}|_{H^k} \colon H^k \rightarrow H $
is $H$-verbally continuous, and so, being $H$ a $T_1$ topological space with respect to the $H$-verbal topology, $w_{\x}$ is constant in 
\begin{equation}
    \label{eq: closure of products}
    \vcl\left(\bigtimes_{i=1}^k n_i (\mathcal{M} \cap N) \right) = \bigtimes_{i=1}^k \vcl(n_i (\mathcal{M} \cap N)),
\end{equation}
where the equality follows from Lemma~\ref{lem: product of closures}.

We claim that $\vcl(t(\mathcal{M} \cap N)) = \vcl(N)$ for every $n \in N$. Fix one $n\in N$, and let $T$ be a \textit{finite} left transversal for $\mathcal{M}\cap N$ in $N$ such that $n\in T$.
As $N\le H$, the left multiplication map $L_t\colon H \rightarrow H$, $x \mapsto tx$ is an $H$-verbal homeomorphism for every $t\in T$, so that 
\[ \vcl(N)
=
\vcl\bigg(\bigcup_{t \in T} t (\mathcal{M} \cap N)\bigg)
=
\bigcup_{t \in T} t \vcl(\mathcal{M} \cap N).\]
Now, since $N$ is $H$-verbally irreducible, so is $\vcl(N)$, and since the $t\vcl(\mathcal{M}\cap N)$ are $H$-verbally closed, it follows that
$$
\vcl(t(\mathcal{M} \cap N))
= 
t\vcl(\mathcal{M} \cap N)
=
\vcl(N),
$$
as claimed. 

This shows, in view of (\ref{eq: closure of products}), that $w_{\x}$ is constant in $\vcl(N)^{k}$, i.e.
\[ w_\x\left\{ \vcl(N) \right\} = w_{\x}(1, \dots, 1) = \{ 1 \}.\]
Since $\x$ was arbitrary, $\vcl(N)$ is marginal for $w$ in $H$,  and we conclude by applying  Lemma \ref{lem: closures preserve marginality}.
\end{proof}

\begin{proof}[Proof of Theorem~\ref{thm: eq noetherian}]
Let $H$ be a profinite-dense equationally Noetherian subgroup of $G$, and let $H_0$ be the irreducible component of $H$, which turns out to be a normal subgroup of finite index in $H$ (\textit{cf.}~\cite[Lemma 5.2]{Weh}).
On the one hand, since $H$ is profinite-dense in $G$, the subgroup $\pcl(H_0)$ has finite index in $\pcl(H)=G$.
On the other hand, Proposition~\ref{prop: connected implies marginal} shows that $\pcl(H_0)$ is marginal for $w$ in $ G$.
As $|w\{ G \}| \leq |G : \pcl(H_0)|^k$, where $k$ is the number of variables in $w$, it follows that $w\{G\}$ is finite. 
\end{proof}

By combining the theorem above and the following simple lemma, Corollary~\ref{cor: completion of linear} follows immediately, as a residually $\C$ group has a dense embedding into its pro-$\C$ completion.

\begin{lemma}
\label{lem: sequences}
Let $G$ be a Hausdorff topological group and let $H \leq G$ be a sequentially dense subgroup of $G$.
If $w\{H\}$ is finite for a word $w$, then $w(G) = w(H)$.
\end{lemma}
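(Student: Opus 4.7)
The plan is to show directly that $w\{G\} = w\{H\}$, from which the equality of verbal subgroups follows by taking the subgroup generated by these sets. Since $H \leq G$, the inclusion $w\{H\} \subseteq w\{G\}$ is immediate, so the only content is proving the reverse inclusion.

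To this end, I would fix $g_1, \ldots, g_k \in G$ (where $k$ is the number of variables of $w$) and, using sequential density of $H$, choose for each $i$ a sequence $(h_i^{(n)})_n$ in $H$ converging to $g_i$. Because $G$ is a topological group, the generalised word map $w \colon G^k \to G$ is continuous; here $G^k$ carries the product topology, in which $(h_1^{(n)}, \ldots, h_k^{(n)})$ converges to $(g_1, \ldots, g_k)$. Continuity then yields
\[
w\bigl(h_1^{(n)}, \ldots, h_k^{(n)}\bigr) \longrightarrow w(g_1, \ldots, g_k) \qquad \text{in } G.
\]

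Each term on the left belongs to $w\{H\}$, which is finite by hypothesis, and finite subsets of a Hausdorff space are closed. Hence the limit lies in $w\{H\}$, giving $w(g_1, \ldots, g_k) \in w\{H\}$. Since $(g_1, \ldots, g_k) \in G^k$ was arbitrary, we obtain $w\{G\} \subseteq w\{H\}$, and combined with the trivial reverse inclusion, $w\{G\} = w\{H\}$. Taking the subgroup generated by both sides then gives $w(G) = w(H)$.

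There is no real obstacle: the argument is purely topological, relying only on the continuity of the word map on a topological group, the fact that finite sets are closed in Hausdorff spaces, and the sequential density hypothesis. The only minor care is to remember that generalised words (with constants from $G$) also define continuous maps on powers of $G$, which is immediate from the continuity of the group operations and the fact that constant maps are continuous.
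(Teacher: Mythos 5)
Your proof is correct and follows essentially the same route as the paper: approximate an arbitrary tuple in $G^k$ by sequences from $H$, use continuity of the (generalised) word map, and exploit finiteness of $w\{H\}$ together with the Hausdorff property to conclude the limit already lies in $w\{H\}$ (the paper phrases this as the sequence eventually stabilising, which is the same observation). No issues.
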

\begin{proof}
For each tuple $(g_1, \dots, g_k)$ in $G^k$ there exist sequences $(h_{i, \, n})_{n \in \mathbb{N}}$ in $H$, with $i \in \{1, \dots, k\}$, 
such that $(h_{i,n})_{n \in \mathbb{N}}$ 
converges to $g_i$.
Then, by the continuity of word maps, it follows that
\[ \left(w\left(h_{1,n}, \dots, h_{k,n} \right) \right)_{n \in \mathbb{N}} \rightarrow w(g_1, \dots, g_k)\]
is convergent.
Since $w\{H \}$ is finite and $G$ is a Hausdorff topological space, the sequence $(w(h_{1,n}, \dots, h_{k,n}))_{n \in \mathbb{N}}$ eventually stabilises, i.e.
\[ w\left(h_{1,n}, \dots, h_{k,n} \right) = w(g_1, \dots, g_k) \]
for every large enough $n$. Consequently, $w\{G\} = w\{H \}$, and so $w(G) = w(H)$.
\end{proof}

%%%%%%%%%%%%%

\section{Virtually abelian-by-polycyclic groups}
\label{sec: abelian-by-polycyclic}

In this last section, we prove Theorem \ref{thm: abelian-by-polycyclic}. The following lemma will be useful.

\begin{lemma}
\label{lem: existence of abelian normal subgroup}
    Let $G$ be a virtually abelian-by-polycyclic group. Then there exists an abelian normal subgroup $A$ of $G$ such that $G/A$ is virtually polycyclic.
\end{lemma}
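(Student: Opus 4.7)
The plan is to start from a witnessing subgroup structure provided by the hypothesis and then \emph{normalise} the abelian layer by a finite-intersection trick. Concretely, by hypothesis $G$ contains a finite-index subgroup $H$ which is abelian-by-polycyclic, so there exists an abelian subgroup $B \trianglelefteq H$ with $H/B$ polycyclic. The subgroup $B$ is typically not normal in $G$, so the task is to replace it by something that is.

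\medskip

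The key steps I would carry out are the following. First, pass from $H$ to its normal core $N = \bigcap_{g \in G} H^g$, which is still of finite index in $G$ and is normal in $G$. Then set $B_0 = B \cap N$; this is an abelian subgroup of $N$, and it is normal in $N$ because $B \trianglelefteq H \supseteq N$. Moreover, the quotient $N/B_0 = N/(B \cap N) \cong BN/B \leq H/B$ is polycyclic, since subgroups of polycyclic groups are polycyclic. Second, choose a (finite) transversal $g_1, \dots, g_r$ of $N$ in $G$, and define
\[
A \; = \; \bigcap_{i=1}^{r} B_0^{\,g_i}.
\]
Each $B_0^{g_i}$ is abelian and normal in $N^{g_i} = N$, and the collection $\{B_0^{g_1}, \dots, B_0^{g_r}\}$ is permuted by conjugation by elements of $G$, so $A$ is normal in $G$. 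Clearly $A \leq B_0$, so $A$ is abelian.

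\medskip

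It remains to show that $G/A$ is virtually polycyclic. Since $|G:N| < \infty$, it suffices to prove that $N/A$ is polycyclic. For this, observe that $N/A$ embeds diagonally into the finite direct product
\[
\prod_{i=1}^{r} N/B_0^{\,g_i},
\]
and for each $i$ conjugation by $g_i^{-1}$ induces an isomorphism $N/B_0^{\,g_i} \cong N/B_0$, which is polycyclic. Since polycyclic groups are closed under finite direct products and under taking subgroups, $N/A$ is polycyclic, hence $G/A$ is virtually polycyclic.

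\medskip

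I do not expect a real obstacle here: the only subtlety is that one cannot simply take $A = B$ or $A = B_0$ (neither is $G$-normal in general), and one has to resist the temptation to take the product $\prod_i B_0^{g_i}$, which is $G$-normal but need not be abelian, since two normal abelian subgroups of $N$ need not commute. The intersection-of-conjugates construction sidesteps both issues simultaneously.
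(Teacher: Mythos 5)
Your proof is correct and follows essentially the same route as the paper: both replace the abelian subgroup $B$ by the intersection of its finitely many $G$-conjugates (its core), which is abelian and normal in $G$, and then check that the quotient is virtually polycyclic. The only cosmetic differences are that the paper takes the finite-index abelian-by-polycyclic subgroup to be normal from the outset (where you pass to the core of $H$ explicitly), and it verifies polycyclicity of the quotient by an iterated extension argument along the chain of partial intersections $A_i = A_{i-1}\cap B^{g_i}$, rather than via your embedding of $N/A$ into the finite direct product $\prod_i N/B_0^{g_i}$ — both verifications are standard and correct.
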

\begin{proof}
    Let $H$ be a finite-index normal abelian-by-polycyclic subgroup of $G$, and let $B \trianglelefteq H$ be an abelian subgroup such that $H/B$ is polycyclic.
    Write $n=|G:H|$ and let $g_1,\ldots,g_n\in G$ be representative elements for each right coset of $H$ in $G$. Being $B$ normal in $H$, the subgroup $A=\cap_{i=1}^n B^{g_i}$ is abelian and normal in $G$.
    We will show that $G/A$ is virtually polycyclic.

    For that purpose define recursively $A_0 = B$ and $A_i=A_{i-1}\cap B^{g_i}$, for $1\le i\le n$.  Note that each $A_i$ is a normal subgroup of $H$ and that $A_n=A$. It thus suffices to show that each $H/A_i$ is polycyclic.
    We already know it for $A_0 = B$, so assume by induction that  $H/A_{i-1}$ is polycyclic for $i \geq 1$. Thus,  the subgroup $B^{g_i}A_{i-1}/A_{i-1}$ of $H/A_{i-1}$ is polycyclic too, and hence $B^{g_i}/A_i$ is polycyclic.
    Since the homomorphic image $H/B^{g_i}$ of $H/B$ is also polycyclic, then $H/A_i$ is itself polycyclic. 
\end{proof}

The proof of the following proposition follows \cite{Bry77}.

\begin{proposition}
\label{prop: connected inside abelian}
    Let $G$ be a group and $N \trianglelefteq G$ an abelian normal subgroup of $G$.
    Suppose that the group ring $\mathbb{Z}[G/C_G(N)]$ is right Noetherian. Then $N$ satisfies the descending chain condition on $G$-verbally closed subsets.  
\end{proposition}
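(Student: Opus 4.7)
The plan is to interpret $N$ (written additively) as a right $R$-module, where $R := \mathbb{Z}[G/C_G(N)]$ acts via conjugation, and to translate the $G$-verbally closed subsets of $N^k$ into affine subsets of this module. The right Noetherianity of $R$ then yields the required DCC through a standard Galois-type correspondence between right submodules of $R^k$ and certain subgroups of $N^k$.

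The first step is a normal form for generalised word maps on $N^k$. Writing $w = c_0 x_{i_1}^{\epsilon_1} c_1 \cdots x_{i_m}^{\epsilon_m} c_m$ with $c_j \in G$, $i_j \in \{1,\ldots,k\}$ and $\epsilon_j \in \{\pm 1\}$, and setting $d_j := c_j c_{j+1} \cdots c_m$, a direct telescoping gives
\[
w(\mathbf{x}) \;=\; w(\mathbf{1})\cdot\prod_{j=1}^{m} \bigl(x_{i_j}^{\epsilon_j}\bigr)^{d_j},
\qquad
w(\mathbf{1}) = c_0 c_1 \cdots c_m \in G.
\]
For $\mathbf{x} \in N^k$ each conjugate $(x_{i_j}^{\epsilon_j})^{d_j}$ lies in $N$ by normality, and the abelian property allows us to collect them through the $R$-action, yielding (in additive notation for $N$)
\[
w(\mathbf{x}) \;=\; w(\mathbf{1})\cdot\Bigl(\sum_{i=1}^{k} x_i\, r_i^{(w)}\Bigr),
\qquad
r_i^{(w)} \;:=\; \sum_{j:\, i_j = i} \epsilon_j\,\overline{d}_j \,\in\, R,
\]
where $\overline{d}_j$ denotes the image of $d_j$ in $G/C_G(N)$. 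Hence $w(\mathbf{x}) = 1$ has no solution in $N^k$ when $w(\mathbf{1}) \notin N$, and is otherwise equivalent to the affine $R$-module equation $\sum_i x_i\, r_i^{(w)} = -w(\mathbf{1})$.

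Applied to an arbitrary system $S \subseteq G * F_k$, the set $V_G(S) \cap N^k$ is therefore either empty or an affine coset $\mathbf{a}_S + L_S$, where
\[
L_S \;=\; \bigl\{\mathbf{x} \in N^k : \mathbf{x}\cdot\mathbf{r} = 0 \text{ for every } \mathbf{r} \in I_S\bigr\},
\qquad \mathbf{x}\cdot\mathbf{r} := \sum_{i=1}^k x_i r_i,
\]
and $I_S \subseteq R^k$ is the right $R$-submodule generated by the rows $(r_1^{(w)},\ldots,r_k^{(w)})$ for $w \in S$. Given a strictly descending chain $C_1 \supsetneq C_2 \supsetneq \cdots$ of nonempty $G$-verbally closed subsets of $N^k$ (a single empty tail being harmless), writing $C_j = \mathbf{a}_j + L_j$, the nonempty containment $C_{j+1}\subseteq C_j$ forces $L_{j+1}\subseteq L_j$, and strict inclusion forces $L_{j+1}\subsetneq L_j$ (otherwise the two cosets would be equal or disjoint).

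It remains to rule out a strictly descending chain of the $L_j$'s. Introducing the order-reversing pair
\[
\alpha(I) := \{\mathbf{x}\in N^k : \mathbf{x}\cdot\mathbf{r}=0 \ \forall\, \mathbf{r}\in I\},
\qquad
\beta(L) := \{\mathbf{r}\in R^k : \mathbf{x}\cdot\mathbf{r}=0 \ \forall\, \mathbf{x}\in L\},
\]
one checks in the usual way that $\alpha\beta\alpha = \alpha$, so that $L = \alpha(\beta(L))$ whenever $L$ lies in the image of $\alpha$; this applies to each $L_j = \alpha(I_j)$. A strict descent of the $L_j$'s therefore yields a strict ascent $\beta(L_1) \subsetneq \beta(L_2) \subsetneq \cdots$ of right $R$-submodules of $R^k$, contradicting the right Noetherianity of $R^k$ inherited from $R$. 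The main obstacle is the bookkeeping behind the normal-form step, ensuring that $r_i^{(w)}$ really does land in $R$ and the affine translation is correct; once that is in place, the reduction to a Noetherian submodule lattice via the Galois pair $(\alpha,\beta)$ is routine.
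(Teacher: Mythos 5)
Your module-theoretic core is correct and is essentially the paper's own argument: pass to the right $R$-module structure on $N$, $R=\mathbb{Z}[G/C_G(N)]$, rewrite each equation as an (affine) linear condition, and convert a descending chain of solution sets into an ascending chain of right submodules of $R^k$, which stabilises by Noetherianity. In fact you handle the constant term $w(\mathbf{1})$ and the resulting affine-coset structure more carefully than the paper's write-up (which simply writes the condition as $h^{\delta_v}=0$), and you treat arbitrary $k$ where the paper works with one variable.

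There is, however, one genuine gap: when you take ``a strictly descending chain $C_1\supsetneq C_2\supsetneq\cdots$ of nonempty $G$-verbally closed subsets of $N^k$'' and write $C_j=\mathbf{a}_j+L_j$, you are implicitly assuming that every verbally closed subset of $N^k$ is of the form $V_G(S)\cap N^k$. That is false: the solution sets are only a \emph{sub-basis} for the closed sets, so a general closed set is an intersection of finite unions of solution sets, and such a set need not be an affine coset at all (already the union of two distinct singleton solution sets $V_G(x a^{-1})\cup V_G(x b^{-1})$ with $a\neq b\in N$ is closed but in general not a coset of a submodule). What your argument actually establishes is the descending chain condition on the algebraic sets $V_G(S)\cap N^k$; to obtain the statement you still need the standard reduction that the minimal condition on solution sets implies the minimal condition on all verbally closed sets. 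This is exactly the role of the citation of Bryant's Lemma 3.1 with which the paper's proof begins. Once you insert that reduction (by citation or by proving that under DCC on algebraic sets every closed set is a finite union of algebraic sets, so DCC passes to closed sets), your proof is complete and matches the paper's approach.
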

\begin{proof}
In view of \cite[Lemma 3.1]{Bry77}, we shall show that $N$ satisfies the descending chain condition on $G$-verbal solution sets. We can regard $N$ as a right $\mathbb{Z}[G/C_G(N)]$-module, so that the equation $v(h)=1$, where $v\in G*\langle x\rangle$ and $h\in N$, is equivalent to the condition $h^{\delta_v}=0$ for an element $\delta_v$ in $\mathbb{Z}[G/C_G(N)]$. Therefore, for every $S \subseteq G * \langle x \rangle$,
\begin{align*}
V_N(S) &= \left\{ h \in N \mid v(h) = 1 \ \text{for every}\ v \in S \right\}\\
&= \big\{ h \in N \mid h^{\delta_v} = 0\ \text{for every}\ v \in S \big\}\\
&= \big\{ h \in N \mid h^\delta = 0 \ \text{for every}\ \delta \in \Delta_S \big\},
\end{align*}
where $\Delta_S$ is the right ideal of $\mathbb{Z}[G/C_G(N)]$ generated by $\{\delta_v \mid v \in S \}$. 
Thus, every descending chain of solution sets
\begin{equation}
\label{eq: chain solution sets}
V_N(S_1) \supseteq \dots \supseteq V_N(S_i) \supseteq \cdots,
\end{equation}
where $S_i \subseteq G * \langle  x \rangle,$ gives rise to an ascending chain 
\begin{equation}
\label{eq: chain ideals}
\Delta_{S_1} \subseteq \cdots \subseteq \Delta_{S_i} \subseteq \cdots
\end{equation}
of right ideals of $\mathbb{Z}[G/C_G(N)]$. 
Since $\mathbb{Z}[G/C_G(N)]$ is right Noetherian, both chains \eqref{eq: chain solution sets} and \eqref{eq: chain ideals} are stationary.
\end{proof}

\begin{corollary}
\label{cor: finite index connected}
Let $G$ be a virtually abelian-by-polycyclic group, and let $A \trianglelefteq G$ be an abelian normal subgroup of $G$ such that $G/A$ is virtually polycyclic (\textit{cf.} Lemma \ref{lem: existence of abelian normal subgroup}).
Let $A_0 \leq A$ be the $G$-verbally connected component of the identity in $A$. Then $A_0$  has finite index in $A$ and is normal in $G$.
\end{corollary}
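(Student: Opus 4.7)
The plan is to invoke Proposition~\ref{prop: connected inside abelian} after verifying its hypothesis via Hall's classical theorem on Noetherian group rings, and then to deduce the geometric conclusions from the Noetherianity of $A$ in the $G$-verbal topology.

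Since $A$ is abelian, $A \leq C_G(A)$, so $G/C_G(A)$ is a quotient of the virtually polycyclic group $G/A$ and is therefore itself virtually polycyclic. By Hall's theorem on integral group rings of polycyclic-by-finite groups, $\mathbb{Z}[G/C_G(A)]$ is right Noetherian, so Proposition~\ref{prop: connected inside abelian} yields DCC on $G$-verbally closed subsets of $A$. Consequently $A$, endowed with its $G$-verbal topology, is a Noetherian topological space, and therefore has only finitely many $G$-verbally irreducible components and hence only finitely many $G$-verbally connected components.

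Next I would check that both left translations $\lambda_a \colon x \mapsto ax$ (for $a \in A$) and conjugations $\gamma_g \colon x \mapsto g^{-1}xg$ (for $g \in G$) are $G$-verbal self-homeomorphisms of $A$: for any generalised word $v(x) \in G * \langle x \rangle$, both $v(ax)$ and $v(g^{-1}xg)$ are again generalised words in $G * \langle x \rangle$, so solution sets pull back to solution sets. Thus $A$ and $G$ both act on the finite set of $G$-verbally connected components by permutations. Since distinct connected components are disjoint, for any $a \in A_0$ we have $a \in aA_0 \cap A_0$, forcing $aA_0 = A_0$; conversely, $aA_0 = A_0$ immediately gives $a \in A_0$. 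Hence $A_0 = \operatorname{stab}_A(A_0)$ is a subgroup of $A$ of finite index, equal to the size of the $A$-orbit of $A_0$ in the (finite) set of connected components. Normality in $G$ follows similarly: conjugation by $g \in G$ fixes $1$ and permutes connected components, so it must preserve the one through $1$, giving $gA_0 g^{-1} = A_0$.

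The hardest step, I anticipate, will be establishing the correct algebraic input for Proposition~\ref{prop: connected inside abelian} through Hall's theorem and packaging the $G$-verbal DCC as Noetherianity of a topological space; once this is in place, both the finite-index property and the normality of $A_0$ are essentially formal consequences of the fact that $A$ acts transitively on itself by $G$-verbal homeomorphisms and that connected components of a topological space are either equal or disjoint.
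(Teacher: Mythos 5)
Your proposal is correct and follows essentially the same route as the paper: the Noetherianity of the group ring of the virtually polycyclic quotient (you pass to $G/C_G(A)$ explicitly, which is in fact slightly more precise than the paper's citation of $\mathbb{Z}[G/A]$) feeds into Proposition~\ref{prop: connected inside abelian} to give the descending chain condition on $G$-verbally closed subsets of $A$. The only difference is that where the paper then quotes \cite{Weh} for the finite index of $A_0$ and notes its invariance under $G$-verbally continuous automorphisms for normality, you reprove these facts directly -- finitely many connected components of a Noetherian space, permuted by translations and conjugations, with $A_0$ the stabiliser of itself -- which is a valid self-contained substitute for that citation.
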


\begin{proof}
Since $\mathbb{Z}[G/A]$ is right Noetherian (\textit{cf.} \cite[\S \,4.2.3]{LeRo04}), Theorem \ref{prop: connected inside abelian} shows that $A$ satisfies the descending chain condition on $G$-verbally closed subgroups, and thus $|A : A_0|$ is finite by \cite[Lemma 5.2]{Weh}.
Moreover, $A_0$ is invariant under $G$-verbally continuous automorphisms, so in particular it is invariant under the conjugation by any element $g \in G$.
\end{proof}

\begin{proof}[Proof of Theorem \ref{thm: abelian-by-polycyclic}]

Let $H$ be a dense virtually abelian-by-polycyclic subgroup of $G$. Suppose that $|w\{G\}| < 2^{\aleph_0}$ for some word $w$.
Since every word is concise in the class of virtually abelian-by-polycyclic groups, by Lemma~\ref{lem: sequences}, it is enough to prove that $w\{G\}$ is finite.

Let $A$ be an abelian normal subgroup of $H$ such that $H/A$ is virtually polycyclic (\textit{cf.} Lemma~\ref{lem: existence of abelian normal subgroup}).
By Corollary \ref{cor: finite index connected}, we may assume $A$ is $H$-verbally connected, so applying Proposition \ref{prop: connected implies marginal}, we obtain that $\pcl(A)$ is marginal for $w$ in $G$.

Let $T\subseteq H$ be a finite subset of $H$ such that $H=\langle T\rangle A$, and write $\mathcal{T}=\pcl(\langle T\rangle)$. Note that $\langle T\rangle$ is a finitely generated virtually abelian-by-polycyclic group, so it is equationally Noetherian by \cite[Theorem 1.3]{Val} and \cite[Theorem 1]{BM}.
Hence, $|w\{\mathcal{ T}\}|<\infty$ by Theorem \ref{thm: eq noetherian}. On the other hand, $G=\mathcal{T}\cdot \pcl(A)$, and since $\pcl(A)$ is marginal for $w$ in $G$, we conclude that $w\{G \} = w\{\mathcal{T}\}$.
\end{proof}

\end{document}